\newcommand{\kommentar}[1]{}
\definecolor{orange}{rgb}{0.7,0.3,0}
\definecolor{green}{rgb}{.4,.7,.4}
\definecolor{darkred}{HTML}{CC1F1F}
\definecolor{green}{rgb}{.4,.7,.4}
\definecolor{blue}{rgb}{.2,.6,.75}
\definecolor{pastelyellow}{rgb}{0.992157, 0.552941, 0.235294}
\definecolor{pastelorange}{rgb}{0.941176, 0.231373, 0.12549}
\definecolor{pastelred}{rgb}{0.741176, 0., 0.14902}
\definecolor{darkbrown}{rgb}{0.25098, 0., 0.0745098}
\theoremstyle{plain}
  \newtheorem{theorem}{Theorem}
   \newtheorem{thm}{Theorem}[subsection]
  \newtheorem{prop}[thm]{Proposition}
  \newtheorem{proposition}[thm]{Proposition}
  \newtheorem{cor}[thm]{Corollary}
  \newtheorem{lemma}[thm]{Lemma}
    \newtheorem*{question*}{Question}
\theoremstyle{definition}
\theoremstyle{remark}
\DeclareMathOperator{\Cl}{Cl}
\DeclareMathOperator{\Res}{Res}
\def\Z{{\mathbb Z}}
\def\bZ{{\mathbb Z}}
\def\Gal{{\rm Gal}}
\def\GL{{\rm GL}}
\def\Cl{{\rm Cl}}
\def\Ind{{\rm Ind}}
\def\Disc{{\rm Disc}}
\def\F{{\mathbb F}}
\def\bC{{\mathbb C}}
\def\bF{{\mathbb F}}
\def\Q{{\mathbb Q}}
\def\Z{{\mathbb Z}}
\def\F{{\mathbb F}}
\def\Q{{\mathbb Q}}
\def\bQ{{\mathbb Q}}
\def\cF{{\mathcal F}}
\def\bZ{{\mathbb Z}}
\def\bF{{\mathbb F}}
\def\bQ{{\mathbb Q}}
\def\cO{{\mathcal O}}
\def\cN{{\mathcal N}}
\def\fc{{\mathfrak c}}
\def\fp{{\mathfrak p}}
\def\fq{{\mathfrak q}}
\def\fm{{\mathfrak m}}
\def\fn{{\mathfrak n}}
\DeclareMathOperator{\Nm}{Nm}
\title{Power-saving error terms for the number of $D_4$-quartic extensions over a number field ordered by discriminant}
\author{Alina Bucur, Alexandra Florea, Allechar Serrano L\'opez, Ila Varma}
\begin{document}

\maketitle

\begin{abstract}
We study the asymptotic count of dihedral quartic extensions over a fixed number field with bounded norm of the relative discriminant. The main term of this count (including a summation formula for the constant) can be found in the literature (see \cite{cdod4} for the statement without proof and see \cite{Klunerswreath} for a proof), but a power-saving for the error term has not been explicitly determined except in the case that the base field is $\bQ$. In this article, we describe the argument for obtaining both the explicit main term and a power-saving error term for the number of $D_4$-quartic extensions over a general base number field ordered by the norms of their relative discriminants. We also give an extensive overview of the history and development of number field asymptotics.
\end{abstract}

\section{Introduction}

Obtaining asymptotics on the count of number fields ordered by discriminant has been a guiding force in the subfield of number theory that has been coined (originally by Mazur) as ``arithmetic statistics.'' 
 
The simplest case of this question arises from the study of quadratic fields: How many quadratic fields are there of bounded discriminant?  In this lowest-degree case, it is fortunate that quadratic fields are uniquely determined by their discriminant, and so answering the above question boils down to understanding the possible discriminants for quadratic fields. Algebraic methods lead to the conclusion that quadratic (or fundamental) discriminants are exactly the integers of the form $D$, where $D \equiv 1 \bmod 4$ and $D$ is square-free, or $4D$, where $D \equiv 2,3 \bmod 4$ and $D$ is square-free. Analytic methods can then be used to compute that the number of such integers with absolute value bounded by $X$ is
$$\frac{6}{\pi^2} X  + O(\sqrt{X}).$$
The above asymptotic can be proved by using contour integration and the fact that the Riemann zeta function $\zeta(s)$ does not vanish on $\Re(s)=1$.\footnote{
This result goes back many centuries, and was at least known to Gauss, who in \cite{disquisitiones} provides evidence that that class number of imaginary quadratic rings grows like $|D|^{1/2}$. In particular, Gauss claimed (without proof) that the sum of class numbers of quadratic fields of discriminant up to $T$ is $\sum_{|D| < T} H(D) \sim \frac{\pi}{18} T^{3/2},$ but Lagrange may have indexed these rings by their discriminant earlier. One can also think of this count as essentially reducing to counting squarefree integers up to $X$, which might have even been done by Euler.} 

In the 20th century, two main paths of generalizing this question on quadratic discriminants were studied: changing the degree of the number fields in question and changing the base field from $\bQ$ to a general number field (or function field). Altogether, we can pose the general question that is studied to understand number field asymptotics.

\begin{question*} Let $F$ be a base number field and fix an algebraic closure $\overline{F}$ of $F$. Fix a degree $n$ and a Galois group $G$ equipped with an embedding into $S_n$. What is the number of ($F$-isomorphism classes of) degree $n$  field extensions $L$ of $F$ such that the normal closure $\tilde{L}$ of $L$ over $F$ has Galois group isomorphic $G$ and the absolute norm of the relative discriminant $\Disc(L/F)$ is bounded by $X$?\end{question*}

Here, we say two extensions $L$ and $L'$ of a number field $F$ are {\em $F$-isomorphic} if there exists a field isomorphism between $L$ and $L'$ that restricts to the identity on $F$. 

\medskip

Let $N_{F,n}(G,X)$ denote the number of $F$-isomorphism classes of degree $n$ extensions $L$ over $F$ such that $\Gal(\tilde{L}/F) \cong G$ and $\Nm_{F/\bQ}(\Disc(L/F)) \leq X$. We also require that the Galois group $\Gal(\tilde{L}/F)$ of the Galois closure $\tilde{L}$ over $F$, viewed as permuting the set of embeddings of $L$ into $\tilde{L}$, is isomorphic to the fixed choice of image of $G$ in $S_n$. 
Our focus is then on determining the constants of the main term of the asymptotic formula for $N_{F,n}(G,X)$; that is, finding $a_{n}(G), b_{F,n}(G)$ and $c_{F,n}(G)$ such that
\begin{equation}\label{nfasymptotics}
\lim_{X\rightarrow \infty} \frac{N_{F,n}(G,X)}{X^{a_{n}(G)} \log (X)^{b_{F,n}(G)}} = c_{F,n}(G) .
\end{equation}
The study of the positive rational constants $a_{n}(G) = a_{F,n}(G)$ and $b_{F,n}(G)$ is the focus of Malle's Conjecture \cite{malle1, malle2}. We first discuss the development of results and techniques for number field asymptotics predating the statement of Malle's Conjecture. After introducing our main result, we discuss the evolution of the study of number field asymptotics after Malle's statement of their conjecture.

\subsection{Number field asymptotics predating Malle's Conjecture}
Much of the work predating the statement of Malle's conjecture focused on the case when $F=\bQ$. Past the $n = 2$ case described above, Cohn \cite{cohn} determined in 1954 using class field theory and generating functions that equation \eqref{nfasymptotics} holds when $F = \bQ$, $n = 3$, and $G = C_3$ with 
$$a_{\bQ,3}(C_3) = 1/2, \quad  b_{\bQ,3}(C_3) = 0, \quad  c_{\bQ,3}(C_3) = \frac{11 \sqrt{3}}{36 \pi} \cdot \prod_{p \equiv 1 \bmod 6} \frac{(p+2)(p-1)}{p(p+1)}.$$ In 1971, Davenport--Heilbronn \cite{davenportheilbronn} complemented this work by determining that the asymptotic in \eqref{nfasymptotics} holds for $F = \bQ$, $n = 3$ and $G = S_3$ with  $$a_{\bQ,3}(S_3) = 1, \quad b_{\bQ,3}({S_3}) = 0, \quad c_{\bQ,3}(S_3) = \frac{1}{3 \zeta(3)}.$$ 
Their results utilized an algebraic parametrization of cubic rings in terms of $\GL_2(\bZ)$-orbits of integer binary cubic forms; geometry-of-numbers techniques to relate the count of certain orbits to a volume of a body in the fundamental domain for $\GL_2(\bZ)$ on real-coefficient binary cubic forms; and finally sieving methods to restrict to maximal rings and thus obtain an asymptotic count for $S_3$-cubic fields.

In 1979, Baily \cite{baily} proved the asymptotic \eqref{nfasymptotics} in the abelian degree 4 cases ($F = \bQ$, $n = 4$, and $G = C_4$ or $V_4$) with
$$a_{\bQ,4}(C_4) = 1/2,\,\, b_{\bQ,4}(C_4) = 0,\,\, c_{\bQ,4}(C_4) = \frac{3}{\pi^2}\left( -1+ \frac{25}{24}\prod_{p\equiv 1 \bmod 4} \left(1 + \frac{2}{p^{3/2} + p^{1/2}}\right)\right),$$ and
$$a_{\bQ,4}(V_4) = 1/2,\,\, b_{\bQ,4}(V_4) = 2, \,\,c_{\bQ,4}(V_4) =  \frac{1}{48\zeta(2)^3} \prod_p \left(1 + \frac{3p+1}{p^3 + 3p^2}\right)^{-1}.$$
Baily used class field theory and generating functions in a similar manner as the cyclic cubic case for the above results.

In addition, Baily proved that $c X < N_{\bQ,4}(D_4,X)< c' X$ for two constants $c$ and $c',$ where $D_4$ denotes the symmetries of the square. The upper bound is obtained using bounds on the number of quadratic ray class characters of quadratic fields which correspond under class field theory to $D_4$-quartic fields and summing these bounds over all quadratic fields with bounded discriminant. The lower bound was obtained by studying quadratic ray class characters of certain imaginary quadratic fields with even class number. 

In the 1970s and early 1980s, a variety of authors worked on asymptotics for various families of $p$-groups, such as the case $G = C_{p^k}$ for $p \geq 3$, $k \in \bZ_{\geq 2}$, $F = \bQ$ and $n = p^k$, as well as $G = (C_p)^k$ and direct products of $p$-groups with $p'$-groups, where $p,p' \geq 3$ are distinct primes (see \cite{urazbaev,zhang,varmon,oriat}. For more detailed lists, see Pg. 7 of \cite{maki}).

In the late 1980s, a lot of different avenues saw progress, mostly surrounding work of M\"aki and Wright. In 1985, M\"aki \cite{maki} determined that for any abelian group $G$ of order $n = |G|$,
  $$a_{\bQ,|G|}(G) = \frac{1}{n}\left(1 - \frac{1}{p}\right), \quad b_{\bQ,|G|}(G) = \begin{cases} \frac{n_p}{p-1} - 1 & \mbox{if }p \geq 3 \\ n_p - 1 & \mbox{if } p = 2\end{cases}$$
 where $p$ is the smallest prime dividing $n$ and $n_p$ denotes the number of elements of $G$ of order $p$. 
In 1988, Datskovsky--Wright \cite{datskovskywright} determined for any number field $F$,
$$a_{F,3}(S_3) = 1,\,\, b_{F,3}(S_3) = 0, \,\, c_{F,3}(S_3) = \left(\frac{2}{3}\right)^{r_1(F) - 1}\left(\frac{1}{6}\right)^{r_2(F)} \frac{\zeta_F(1)}{\zeta_F(3)},$$ where $\zeta_F(1)$ denotes the residue at $s = 1$ of $\zeta_F(s)$, the Dedekind zeta function associated to $F$, $r_1(F)$ denotes the number of real embeddings of $F$ into $\overline{F}$, and $r_2(F)$ denotes the number of pairs of complex embeddings of $F$ into $\overline{F}$. They also obtained analogous results for any global field of characteristic not equal to 2 or 3; see \cite[Theorem I.1]{datskovskywright}. As a byproduct, they  determined 
$$a_{F,2}(S_2) = 1,\,\, b_{F,2}(S_2) = 0,\,\, c_{F,2}(S_2) = 2^{-r_2(F) - 1} \frac{ \zeta_F(1) }{\zeta_F(2)}.$$ 
The authors accomplished this work building on the work of Davenport and Heilbronn \cite{davenportheilbronn} and through the use of the adelic Shintani zeta functions \cite{Shintani1,Shintani2}.

Building on work of M\"aki, Wright \cite{wright} proved that for any finite abelian group $G$ of cardinality $n$ and any number field $F$, there exists a constant $c_{F,n}(G) > 0$ such that the asymptotic \eqref{nfasymptotics} holds with the exponents
$$a_{n}(G) = \frac{1}{n}\left(1-\frac{1}{p}\right),\,\, b_{F,n}(G) = \frac{n_p}{[F(\zeta_p):F]} - 1 ,$$ where $p$ is the smallest prime dividing $n$, and $n_p$ denotes the number of elements of $G$ of order $p$. Wright developed general techniques in the vein of \cite{cohn} and \cite{baily}, i.e. incorporating class field theory with the use of generating functions that happen to be related to Dirichlet series. Standard Tauberian theorems (most notably, Perron's formula) then allow for the extraction of number field asymptotics.

In the the early 2000s, Cohen--Diaz y Diaz--Olivier \cite{cdod4} determined that, for $F=\bQ$, $n = 4$, and $G=D_4$, 
\begin{equation}\label{CDOD4}
a_{\bQ,4}(D_4) = 1 \quad b_{\bQ,4}(D_4) = 0 \quad c_{\bQ,4}(D_4) = \sum_{[K:\bQ] = 2} \frac{2^{-r_2(K)-1}}{\Disc(K)^2}\frac{\zeta_K(1)}{\zeta_K(2)}.
\end{equation}
They utilized Kummer theory in conjunction with the strategy of Baily \cite{baily} to relate the count of $D_4$-quartic extensions to the count of quadratic extensions of the quadratic extensions. The authors of \cite{cdod4} define a Dirichlet series associated to counting quadratic extensions of quadratic extensions and relate it to the generating function for $D_4$-quartic fields.  
They also remark that it is immediate to extend this result to the case of relative $D_4$-extensions over a general base field.\

\subsection{Main Results}

In this paper, we give a proof of the asymptotic \eqref{nfasymptotics} in the case of quartic dihedral extensions of a number field $F$, with $a_{F,4}(D_4)=1$, $b_{F,4}(D_4) = 0$, and determine the constant $c_{F,4}(D_4)$. Further, we obtain an explicit formula for a power-saving error term in terms of the degree of $F$. 
 More precisely, we prove the following result.

\begin{theorem}\label{main} Let $F$ be a number field with $[F:\bQ] = d$. We have that the asymptotic number of ($F$-isomorphism classes of) quartic $D_4$-extensions of $F$ whose norm of the relative $F$-discriminant is bounded by $X$ is, as $X$ grows, 
  \begin{equation}\label{eqmain}N_{F,4}(D_4,X) = \left(\sum_{[K:F]=2}\frac{1}{2^{r_2(K)+1}} \cdot \frac{1}{\Nm_{F/\bQ}(\Disc(K/F))^2} \cdot \frac{\zeta_{K}(1)}{\zeta_{K}(2)}\right)\cdot X + O\left(X^{\frac{3}{4}+\epsilon}\right).\end{equation}
\label{rem1}
If we assume the Lindel\"{o}f hypothesis, then the error term above can be improved to $O\left(X^{\frac{1}{4}+\epsilon}\right).$
\end{theorem}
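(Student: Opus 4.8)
\emph{Strategy.} The plan is to peel off the quadratic subfield of a $D_4$-quartic extension, encode the count in a Dirichlet series, factor out a Dedekind zeta function and recognize the remainder as a sum of quadratic Hecke $L$-functions over $F$, and then extract the main term and a power-saving error by a truncated Perron formula. First I would make the tower reduction precise: a quartic $L/F$ with $\Gal(\tilde L/F)\cong D_4$ possesses a unique quadratic subfield $K$ (the fixed field of the order-$4$ subgroup containing the point stabilizer), and $L/K$ is quadratic; conversely a pair $(K/F,\,L/K)$ of quadratic extensions yields a $D_4$-quartic field exactly when $L/F$ is not Galois, the exceptions being the $V_4$- and $C_4$-quartic fields containing $K$, whose total counts are $O(X^{1/2+\epsilon})$ by Wright's theorem (quoted above) and hence negligible. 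The tower formula for relative discriminants gives $\Disc(L/F)=\Disc(K/F)^{2}\cdot\Nm_{K/F}\!\big(\mathfrak d_{L/K}\big)$, hence
$$\Nm_{F/\bQ}\big(\Disc(L/F)\big)=\Nm_{F/\bQ}\big(\Disc(K/F)\big)^{2}\cdot\Nm_{K/\bQ}\big(\mathfrak d_{L/K}\big),$$
where $\mathfrak d_{L/K}$ is the conductor $\mathfrak f(\psi)$ of the quadratic Hecke character $\psi$ of $K$ cutting out $L$; here I would also pin down the local contributions at primes above $2$ and the archimedean places, where wild ramification and the unit group enter.

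\emph{The Dirichlet series and its pole.} Set $D_F(s)=\sum_{L}\Nm_{F/\bQ}\big(\Disc(L/F)\big)^{-s}$, the sum over $D_4$-quartic $L/F$, so that $N_{F,4}(D_4,X)$ is the summatory function of its coefficients. The reduction yields
$$D_F(s)=\sum_{[K:F]=2}\frac{\Phi_K(s)-1}{\Nm_{F/\bQ}\big(\Disc(K/F)\big)^{2s}}\;-\;(\text{Galois correction}),\qquad \Phi_K(s)=\sum_{\psi}\frac{1}{\Nm_{K/\bQ}\big(\mathfrak f(\psi)\big)^{s}},$$
the inner sum running over quadratic characters $\psi$ of $K$ (including the trivial one). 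A standard local computation — each tame prime $\mathfrak p\nmid 2$ of $K$ contributes $1+\Nm\mathfrak p^{-s}$, the finitely many primes above $2$ contribute an explicit Euler factor, and the archimedean places together with $\mathrm{Cl}(K)[2]$ contribute an explicit constant — gives $\Phi_K(s)=\dfrac{\zeta_K(s)}{\zeta_K(2s)}\,W_K(s)$, where $W_K$ is holomorphic and of size $\ll_\epsilon \Nm_{F/\bQ}(\Disc(K/F))^{\epsilon}$ on $\Re(s)>1/2$, with $\Res_{s=1}\Phi_K(s)=2^{-r_2(K)-1}\zeta_K(1)/\zeta_K(2)$. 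Thus $D_F(s)$ has a simple pole at $s=1$ of residue $c_{F,4}(D_4)=\sum_{[K:F]=2}2^{-r_2(K)-1}\Nm_{F/\bQ}(\Disc(K/F))^{-2}\zeta_K(1)/\zeta_K(2)$; this sum converges because $\zeta_K(1)\ll_\epsilon\Nm_{F/\bQ}(\Disc(K/F))^{\epsilon}$ (class number formula together with a Brauer--Siegel-type bound) and $\#\{K:\Nm_{F/\bQ}(\Disc(K/F))=m\}\ll_\epsilon m^{\epsilon}$.

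\emph{Continuation and Perron.} The sum over $K$ converges only for $\Re(s)>1$, so to continue $D_F$ past $s=1$ I would write $\zeta_K(s)=\zeta_F(s)\,L(s,\chi_K)$, with $\chi_K$ the quadratic Hecke character of $F$ with $\mathfrak f(\chi_K)=\Disc(K/F)$, obtaining
$$D_F(s)=\frac{\zeta_F(s)}{\zeta_F(2s)}\sum_{\chi_K}\frac{L(s,\chi_K)}{L(2s,\chi_K)}\cdot\frac{W_K(s)}{\Nm_{F/\bQ}\big(\mathfrak f(\chi_K)\big)^{2s}}\;+\;(\text{lower-order pieces}),$$
the lower-order pieces (the ``$-1$'' subtraction $\sum_K\Nm_{F/\bQ}(\Disc(K/F))^{-2s}$, whose rightmost pole is at $s=1/2$, and the Galois correction) being handled directly and contributing $O(X^{1/2+\epsilon})$. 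For the main piece one has $|L(2s,\chi_K)|^{-1}\ll_\sigma 1$ uniformly in $\chi_K$ once $\Re(2s)>3/2$, and the essential task is to continue and bound, uniformly in the imaginary part, the twisted character sum $\sum_{\chi_K}L(s,\chi_K)\,\Nm_{F/\bQ}(\mathfrak f(\chi_K))^{-2s}(\cdots)$ in a strip $\sigma_0<\Re(s)<1$ — equivalently, to control the associated multiple Dirichlet series on the line where the conductor variable equals $2s$. The convexity bound for $L(s,\chi_K)$, together with a second-moment estimate over the family of quadratic characters of $F$, permits $\sigma_0=3/4$ (the barrier $3/4$ being the unconditional shadow of the secondary main term in the first moment of quadratic $L$-functions); a truncated Perron formula, shifting the contour to $\Re(s)=3/4+\epsilon$ and optimizing the truncation height, then yields $N_{F,4}(D_4,X)=c_{F,4}(D_4)\,X+O(X^{3/4+\epsilon})$. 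Under the Lindel\"of hypothesis for the relevant Hecke $L$-functions the convexity exponent is replaced by $\epsilon$, and the attendant zero-density input lets the contour be moved to $\Re(s)=1/4+\epsilon$ — just past the line $\Re(2s)=1/2$ carrying the conjectural zeros of $\zeta_F(2s)L(2s,\chi_K)=\zeta_K(2s)$ — producing the improved error $O(X^{1/4+\epsilon})$.

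\emph{Main obstacle.} The crux is precisely the uniform analytic control in the strip of the twisted sum $\sum_{\chi_K}L(s,\chi_K)\,\Nm_{F/\bQ}(\mathfrak f(\chi_K))^{-2s}\,W_K(s)$: the naive per-$K$ estimate (bounding each count of quadratic extensions of $K$ by an ideal-counting argument) does not suffice, and one genuinely needs cancellation in the sum over the quadratic characters $\chi_K$ of $F$ — i.e.\ convexity and second-moment bounds, and conditionally Lindel\"of-quality bounds, for this family of Hecke $L$-functions, with explicit uniformity in both the conductor and the imaginary part — combined with the bookkeeping of the wild-ramification factors $W_K(s)$ and of the auxiliary series. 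Everything else — the tower bijection, the local computations at $2$, the residue identification, and the Perron/Tauberian extraction — is routine once these estimates are in hand.
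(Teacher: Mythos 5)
Your overall strategy --- peel off the quadratic subfield, pass to the Dirichlet series counting quadratic extensions of quadratic extensions, and run a truncated Perron formula with a contour shift to $\Re(s)=3/4+\epsilon$ --- is the same as the paper's. But there are genuine problems in the execution. First, a bookkeeping slip: in the double sum over pairs $(K/F,\,L/K)$, each $F$-isomorphism class of $D_4$-quartics is counted \emph{twice}, because the normal closure $\tilde L$ contains two conjugate (hence $F$-isomorphic) quartic subfields, both quadratic over the same $K$ and not $K$-isomorphic to each other; the correct identity is $\sum_K\sum_L 1 = 2N_{F,4}(D_4,X)+N_{F,4}(C_4,X)+3N_{F,4}(V_4,X)$. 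You omit this factor of $2$ and compensate by asserting $\Res_{s=1}\Phi_K(s)=2^{-r_2(K)-1}\zeta_K(1)/\zeta_K(2)$, which is itself off by a factor of $2$: the residue of $1+\Phi_{K,2}(C_2,s)$ at $s=1$ is $2^{-r_2(K)}\zeta_K(1)/\zeta_K(2)$ (test $K=\bQ$: the count of quadratic fields is $\tfrac{6}{\pi^2}X$, not $\tfrac{3}{\pi^2}X$). The two errors cancel in the final constant, but the derivation as written is not correct.

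Second, and more seriously, the central analytic step is misconfigured. The identity $\Phi_K(s)=\tfrac{\zeta_K(s)}{\zeta_K(2s)}W_K(s)$ with $W_K$ holomorphic and $\ll_\epsilon \Nm\Disc(K/F)^{\epsilon}$ on $\Re(s)>1/2$ is false: the conductor sum over quadratic Hecke characters is not an Euler product, and the exact formula \eqref{eq1} expresses $1+\Phi_{K,2}(C_2,s)$ as a \emph{finite sum} of ray class $L$-functions $L_K(s,\chi)$ of $K$ divided by $\zeta_K(2s)$; the nontrivial-$\chi$ terms can neither be divided by $\zeta_K(s)$ (this creates poles at its zeros, destroying holomorphy of your $W_K$) nor be bounded by $\Disc^{\epsilon}$ in the critical strip unconditionally. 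You then misidentify the crux: you claim the termwise per-$K$ estimate ``does not suffice'' and that one needs cancellation and second-moment bounds over the family $\{\chi_K\}$, i.e.\ a multiple-Dirichlet-series input that you do not supply. In fact the $X^{3/4+\epsilon}$ error term follows from the termwise estimate applied to the Dirichlet series: the convexity bound $L_K(s,\chi)\ll\Nm(\Disc(K/F))^{1/2}$, the bound $O_\epsilon(\Nm\Disc(K/F)^{\epsilon})$ on the number of quadratic characters of $\Cl_{\fc^2}(K)$, and the uniform boundedness of $1/\zeta_K(2s)$ for $\Re(2s)>3/2$ show that $\Phi(s)$ minus its polar part converges absolutely for $\Re(s)>3/4$, since $\sum_K\Nm(\Disc(K/F))^{-2\sigma+1/2+\epsilon}$ converges exactly in that range; no cancellation over the family is needed. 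Finally, your conditional step --- shifting the contour to $\Re(s)=1/4+\epsilon$, ``just past the line carrying the zeros of $\zeta_K(2s)$'' --- would cross poles of the factors $1/\zeta_K(2s)$, and the sum over $K$ no longer converges there even under Lindel\"of; the paper instead keeps the contour at $\Re(s)=3/4+\epsilon$ and uses Lindel\"of only to sharpen the bound on the vertical integral so that the truncation height $T=X^{3/4}$ can be taken.
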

Recall that for $\Re(s)>1$ the Dedekind zeta function of any number field $E$ is given by \[\zeta_E(s) = \prod_{\fp \subseteq \cO_E} \left({1 - \Nm_{E/\bQ}(\fp)^{-s}}\right)^{-1},\] where $\fp$ runs over prime ideals of $\cO_E$. It has meromorphic continuation to the full complex plane, and is analytic everywhere except for a simple pole at $s = 1$. By abuse of notation, we denote by $\zeta_E(1)$ the residue at $s = 1$.

The main term in Theorem \ref{main} is proven by Kl\"uners \cite{Klunerswreath} without a power-saving error term. We show that the proof argument utilized by Cohen--Diaz y Diaz--Olivier \cite{cdod4} to prove that $a_{\bQ,4}(D_4) = 1$ and $b_{\bQ,4}(D_4) = 0$ holds true for general $F$, and that the formula \eqref{eqmain} is the same as the one stated by them in Remark (4) after \cite[Corollary 6.1]{cdod4} with $\alpha =  \frac{3}{4}$. We point out that our power-saving error term does not depend on the degree of $F$, as the aforementioned authors suggest!

\subsection{Malle's conjecture, counterexamples, refinements, and proven cases}

In 2004, Malle \cite{malle1, malle2} made a prediction for the constants $a_{F,n}(G) = a_{n}(G)$ and $b_{F,n}(G)$ for all transitive groups $G \hookrightarrow S_n$. The prediction was that $$a_{n}(G) = \frac{1}{\min_{g \in G \backslash 1} \Ind (g)},$$ 
where the index of an element $g$ is defined as $\Ind(g) := n - \#$ orbits of $\{1,2,...n\}$ when acted upon by $g$ via the embedding into $S_n$,
and 
$$b_{F,n}(G) = b^{\rm Malle}_{F,n}(G) := \#\{\Gal(\overline{F}/F)\text{- orbits of } G\text{-conjugacy classes of minimal index } \Ind(C)\} -1,$$  where the notation is defined as follows: Note that if $g$ and $g'$ are in the same conjugacy class $C$ of $G$, then $\Ind(g) = \Ind(g')$, and furthermore, the absolute Galois group $\Gal(\overline{F}/F)$ acts on the set of conjugacy classes $\{C\}$ of $G$ via the action of the cyclotomic character, i.e., $\mu: \Gal(\overline{\bQ}/\bQ) \rightarrow \hat{\bZ}^\times$ so that $\sigma \in \Gal(\overline{\bQ}/\bQ)$ acts on roots of unity as $\sigma(\zeta_n) = \zeta_n^{\mu(\sigma)}$. For $g \in G$, we can define $\sigma(g) = g^{\mu(\sigma)}$, or in fact on conjugacy classes $C$ of $G$: $\sigma(C) = C^{\mu(\sigma)}$. We call the orbits under this action on the $G$-conjugacy classes $\{C\}$ by $\Gal(\overline{F}/F)$-orbits. 

Note that in the trivial case when $n=1$ and $G$ is the trivial group we have $a_{1,F}(1) = b_{1,F}(1) = 0$ and $c_{1,F}(1) = 1$ for any field $F.$ 

It is also worth pointing out that when the image of $G$ in $S_n$ contains a simple transposition $(ab)$, then $\Ind((ab)) = 1$ since $(ab)$ has $n-1$ orbits in $S_n$, hence $a_n(G)$ is predicted to be 1. The number of $G$-conjugacy classes of minimal index $1$ will be $1$, since all simple transpositions are conjugate to one another and no other permutation type can have $n-1$ orbits, hence $b_{F,n}^{\rm Malle}(G) = 0$ in this case. 

In 2005, two different directions of progress arose from the development of averaging techniques in geometry-of-numbers counting due to Bhargava and a noteworthy counterexample and subsequent work to Malle's Conjecture due to Kl\"uners. 

Bhargava \cite{densityofquartics} determined for $S_4$-quartic fields over $\bQ$ that $a_{\bQ,4}(S_4) = 1$ and $b_{\bQ,4}(S_4) = 0$ and obtained an Euler product formula for 
$$c_{\bQ,4}(S_4) = \frac{5}{24}\prod_p \left(1 + \frac{1}{p^2} - \frac{1}{p^3} - \frac{1}{p^4}\right).$$ 
Bhargava's methods built on the work of Davenport-Heilbronn \cite{davenportheilbronn}, Delone--Faddeev \cite{DF}, and Sato-Kimura \cite{SK} using the same proof strategy of a parametrization (this time, in terms of pairs of ternary quadratic forms) allowing for geometry-of-numbers methods followed by sieving techniques. However, Bhargava produced a general machine of averaging techniques for cutting off cusps when computing a volume in the fundamental domain and relating it to the number of integral orbits or lattice points that enumerate quartic rings. This allowed for generalizations to the $n = 5$ case \cite{densityofquintics}, in which he determined in 2010 that $a_{\bQ,5}(S_5) = 1$ and $b_{\bQ,5}(S_5) = 0$ and obtained an Euler product formula for 
$$c_{\bQ,5}(S_5) =  \frac{13}{120} \prod_p \left(1 + \frac{1}{p^2} - \frac{1}{p^4} - \frac{1}{p^5}\right).$$ In addition, Bhargava and Wood \cite{bhargavawood} and independently, Belabas and Fouvry \cite{belabasfouvry} verified for $S_3$-sextic fields that $a_{\bQ,6}(S_3) = 1/3$ and $b_{\bQ,6}(S_3)=0$ and furthermore determined that $$c_{\bQ,6}(S_3) = \frac{1}{3}\left(1+ \frac{1}{3} + \frac{1}{3^{5/3}} + \frac{2}{3^{7/3}}\right)\left(1 - \frac{1}{3}\right)\prod_{p \neq 3} \left(1 + \frac{1}{p} +\frac{1}{p^{4/3}}\right)\left(1-\frac{1}{p}\right) .$$

While studying these questions, Bhargava \cite{BhargavaIMRN} formulated a conjecture for the constant in the case when $G = S_n$. Namely, he conjectured that $$c_{F,n}(S_n) = \frac{1}{2}\cdot \zeta_F(1) \left(\frac{\#S_n[2]}{n!}\right)^{r_1(F)} \left(\frac{1}{n!}\right)^{r_2(F)} \prod_\fp \left(\sum_{k=0}^n \frac{q(k,n-k) - q(k-1,n-k+1)}{\Nm_{F/\bQ}(\fp)}\right) $$ for all $n$, where $q(k, n)$ denotes the number of partitions of $k$ into at most $n$ parts (see conjecture A in the Appendix of \cite{BhargavaIMRN}). 
A conjecture for $c_{F,n}(G)$ for general $G$ has not been formulated as the direct analogy to Bhargava's heuristic is contradicted by the result of \cite{cdod4} in the case of $n = 4$ and $G = D_4$ (see \cite{cdod4} and \cite[Section \S3.3]{ASVW}). Bhargava's quartic and quintic results were generalized to all number fields $F$ by  Bhargava--Shankar--Wang \cite{bsw}, including formulas for the constants $c_{F,4}(S_4)$ and $c_{F,5}(S_5)$ that verified Bhargava's conjecture along with Malle's conjecture. 

Also in 2005, Kl\"uners \cite{Klunerscounterexample} provided a counterexample to Malle's Conjecture by proving a lower bound for $N_{\bQ,6}(C_3,\wr C_2,X)$ that implies that $b_{\bQ,6}(C_3 \wr C_2) > 0$, contradicting the formula given by Malle, which would have predicted that $b_{\bQ,6}({C_3 \wr C_2}) = 0$.  The contradiction seems to arise from the fact that while there aren't third roots of unity in the base field, third roots of unity can arise in an intermediate extension, and $C_3 \wr C_2$-extensions of $\bQ$ that contain $\bQ(\zeta_3)$ and have discriminant bounded by $X$ are themselves at least $c \cdot X^{1/2}\log(x)$ for some positive constant $c$. Nevertheless, Kl\"uners \cite{Klunerswreath} did prove that Malle's conjecture holds for $G = C_2 \wr H$ where $H$ is a transitive subgroup of $S_{n/2}$ such that over $F$, there is at least one extension of $F$ with Galois group $H$ and $N_{F,n/2}(H,X) = O_{F,H,\epsilon}(X^{1+\epsilon})$ for every $\epsilon > 0$. Building on \cite{cdod4}, Kl\"uners \cite{Klunerswreath} computed  $$c_{F,n}(C_2 \wr H) = \sum_{{[K:F]=n/2 \atop \Gal(\tilde{K}/F) = H}} \frac{1}{2^{r_2(K)}\Nm_{F/\bQ}(\Disc(K)^2)} \cdot\frac{\zeta_K(1)}{\zeta_K(2)}.$$ 
In particular, when $H = C_2$, Kl\"uners \cite{Klunerswreath} proved the generalization of \eqref{CDOD4} in the $D_4$-quartic case from over $\bQ$ to over a general base number field $F$ without a power-saving error term. Furthermore, Kl\"uners \cite{Klunersthesis} verified Malle's Conjecture for generalized quaternion groups of order $4m$, determining that $a_{F,4m}(Q_{4m}) = 1/2m$ and $b_{F,4m}(Q_{4m}) = 0$.

In 2015, T\"urkelli \cite{turkelli} formulated function field heuristics for $N_{F,n}(G,X)$ (i.e., when $F$ is taken to be a function field $\bF_q(t)$ instead of a number field), and came up with a way to salvage Malle's conjecture that is consistent with Kl\"uners's counterexample. Namely, consider first for a group $G$ the set $\cN(G)$ of normal subgroups $N$ in $G$ such that $G/N$ is abelian and $a_{n}(N) = a_{n}(G)$ (and where $a_{n}(\{1\}) = 0$). Define $$b_{F,n}(G,N) = \max_{\varphi \in {\rm Sur}(\Gal(F(\mu_\infty)/F),G/N))} b_{F,n}(G,N,\varphi),$$ where $b_{F,n}(G,N,\varphi)$ is the number of twisted $F$-orbits of $N$-conjugacy classes of minimal index, where the action of $\Gal(\overline{F}/F)$ is defined now with a twist: for a conjugacy class $C$ of $N$, $\sigma(C) = \varphi(\Res(\sigma))^{-1} \cdot C^{\mu(\sigma)}$, where $\Res: \Gal(\overline{F}/F) \rightarrow \Gal(F(\mu_\infty)/F)$ and $\varphi(\Res(\sigma))^{-1}$ acts by conjugation on the $N$-conjugacy class $C^{\mu(\sigma)}$.\footnote{It is worth pointing out that in Malle's original formulation, he only considered the case of $N = G$, and so $b^{\rm Malle}_{F,n}(G) = b_{F,n}(G,G)$. This only coincides with $b_{F,n}(G)$ when $b_{F,n}(G,G)$ is the maximum over all $b_{F,n}(G,N)$ for $N \in \cN(G)$.} Then, we can define $$b_{F,n}(G) = -1 +\max_{N \in \cN(G)} b_{F,n}(G,N)$$ as the salvage to $b_{F,n}^{\rm Malle}(G)$.  

T\"urkelli's heuristics \cite{turkelli} seemed to contradict earlier predictions made by Ellenberg--Venkatesh \cite{ellenbergvenkatesh1} for counting extensions of the function field $\F_q(t)$ which agree with Malle's original conjecture. However, Ellenberg--Venkatesh focused on extensions that do not contain any nontrivial constant intermediate extensions, while T\"urkelli \cite{turkelli} allowed for those to be counted in their heuristic, which has a stronger parallel to Malle's conjecture where extensions containing cyclotomic subfields are included in the asymptotics. (On the geometric side, T\"urkelli \cite{turkelli} includes connected, but not geometrically connected, covers of the projective line over $\F_q$ while Ellenberg-Venkatesh \cite{ellenbergvenkatesh1} require covers to be geometrically connected as well.)

More recently, Wang \cite{wangthesis} has made substantial progress on techniques for proving Malle's conjecture in compositums of number fields. The first application of these methods was to prove Malle's Conjecture for certain direct product groups of the form $G = S_n \times A$, including the case when $n = 3$ and $A$ is an odd abelian group; the case when $n = 4$ and $A$ is an abelian group of order coprime to 6; and the case when $n = 5$ and $A$ is an abelian group of order relatively prime to 30 (see \cite{wangcompositio}). Later generalizations came from joint work of Masri--Thorne--Tsai--Wang \cite{mttw} to proving Malle's conjecture for $G = S_n \times A$ where $A$ is any abelian group and $n = 3,4,5$.

Most recently, Fouvry--Koymans \cite{fouvrykoymans} proved Malle's conjecture for nonic Heisenberg extensions by estimating a certain character sum. They determined that there exists a constant $c_{\bQ,9}(\mathrm{Heis}_3)>0$, which they determine explicitly (see Pgs. 17-18 of \cite{fouvrykoymans}), such that \eqref{nfasymptotics} holds with
$$a_{\bQ,9}(\mathrm{Heis}_3) = 1/4,\,\, b_{\bQ,9}(\mathrm{Heis}_3) = 0.$$
Additionally, Koymans--Pagano \cite{koymanspagano} proved that for any number field $F$ and any finite nilpotent group $G$ of cardinality $n$ such that all elements of order $p$ are central, where $p$ is the smallest prime dividing $n$, there exists a constant $c_{F,n}(G) > 0$, and determined 
$$a_{n}(G) = \frac{1}{n}\left(1-\frac{1}{p}\right),\,\, b_{F,n}(G) = \frac{n_p}{[F(\zeta_p):F]} - 1 ,$$ where $n_p$ denotes the number of elements of $G$ of order $p$.

At the time of writing this article, the authors have heard of forthcoming work of Alberts--Lemke Oliver--Wang--Wood \cite{albertsetal} in which the authors develop inductive counting methods to prove Malle's conjecture for groups $G$ for which all elements in $G$ of sufficiently small index are contained in a normal subgroup which is either abelian or generated by conjugates of a single subgroup of $G$ isomorphic to $S_3$. Additionally, work-in-progress at the time of writing this article includes Shankar--Varma \cite{shankarvarma} for verifying Malle's conjecture in the Galois octic case by determining $a_{\bQ,8}(D_4) = 1/4$ and $b_{\bQ,8}(D_4) = 2$ and computing an Euler product for $c_{\bQ,8}(D_4)$. 

We do not claim that this is a full history of the study of number field asymptotics; we tried to focus on the results that specifically proved (the strong form of) Malle's conjecture and study of the constants $c_{F,n}(G)$ in those cases. Much work has been developed in proving the weak form of Malle's conjecture \cite{malle1}, i.e., that there exist positive real constants $c^{(1)}_{F,n}(G)$ and $c^{(2)}_{F,n}(G,\epsilon)$ such that
  $$c^{(1)}_{F,n}(G) X^{a(G)} \leq N_{F,n}(G,X) < c^{(2)}_{F,n}(G,\epsilon) X^{a(G)+\epsilon} $$
for large enough $X$ (see \cite{klunersmalle}, \cite{alberts1}, \cite{alberts2}). Many analogous results have been obtained over function fields, the case of quartic $D_4$-extensions by Keliher \cite{keliher}, the case of cubic $S_3$-extensions studied by Wood \cite{woodtrigonal}, work of Ellenberg--Venkatesh--Westerland \cite{ellenbergetal} on degree $p$ $D_p$-extensions for odd primes $p$, the case of $C_2$-extensions (with added local conditions) studied by Kurlberg--Rudnick \cite{kr}, the case of degree $p$ $C_p$-extensions (with local conditions) for odd primes $p$ studied by Bucur--David--Feigon--Lal\'in \cite{bdfl} and Bucur--David--Feigon--Kaplan--Lal\'in--Ozman--Wood \cite{bucuretal}.

In addition, much work has been done in proving and improving upper and lower bounds for $N_{F,n}(G,X)$ (see \cite{schmidt}, \cite{shih}, \cite{Klu06}, \cite{evbounds}, \cite{dummit}, \cite{pierceetal}, \cite{klunerswang}, \cite{mallematzat}, \cite{matzatmarschke}, \cite{bswschmidt}, \cite{rlothorne}, \cite{improvedsmalldeg})
as well as other important work on building methods and techniques for answering these questions.

\subsection{Summary of our methods}

Fix a number field $F$ of degree $d$, and let $\overline{F}$ denote a fixed algebraic closure of $F$. In the sequel, all extensions of $F$ will be assumed to be contained in $\overline{F}$. 

Recall that $N_{F,n}(G,X)$ denotes the number of $F$-isomorphism classes of degree $n$ extensions $L$ over $F$ such that the normal closure $\tilde{L}$ of $L$ over $F$ has Galois group $\Gal(\tilde{L}/F) \cong G$ and $\Nm_{F/\bQ}(\Disc(L/F)) \leq X$, and we say two extensions $L$ and $L'$ of a number field $F$ are {\em $F$-isomorphic} if there exists a field isomorphism between $L$ and $L'$ that restricts to the identity on $F$.

We are interested in counting the number of ($F$-isomorphism classes of) quartic extensions $L$ whose normal closure over $F$ has Galois group $D_4$, the symmetries of a square, when such extensions are ordered by the norms of their relative discriminants over $F$.  We will call such an $L$ a {\em $D_4$-quartic extension} over $F$, and let $\Disc(L/F)$ denote the relative discriminant. Denote by $\cF_{F,4}(D_4)$ the set of $F$-isomorphism classes of $D_4$-quartic extensions over $F$. 

To obtain the order of magnitude for $D_4$-quartic extensions over a general base number field $F$ with discriminant norm bounded by $X$, we must understand the poles of the Dirichlet series $\Phi_{F,4}(D_4,s)$ associated to the count $N_{F,4}(D_4,X)$:
$$\Phi_{F,4}(D_4,s) = \sum_{L \in \mathcal{F}_{F,4}(D_4)} \frac{1}{\Nm_{F/\Q}(\Disc(L/F))^{s}},$$
where $\cF_{F,4}(D_4)$ denotes the set of $F$-isomorphism classes of $D_4$-quartic extensions of $F$. Perron's formula implies that
\begin{equation}
N_{F,4}(D_4,X) = \frac{1}{2 \pi i} \int_{\Re(s) =2} \Phi_{F,4}(D_4,s) \frac{X^s}{s}\, ds.
\end{equation}
Determining the residue of the poles of the above integral after shifting the contours of integration gives the main term contribution for $N_{F,4}(D_4,X)$, and we additionally obtain (a power-saving) error term from bounding the contribution of the integrals on the other sides of the contour of integration. 

To study the residue, we follow the same strategy as Cohen--Diaz y Diaz--Olivier utilized in \cite[Corollary 1.4]{cdod4} to prove the case $F = \bQ$. Specifically, we will relate the count $N_{F,4}(D_4,X)$ to the count of quadratic extensions of quadratic extensions of $F$, and therefore relate the Dirichlet series $\Phi_{F,4}(D_4,s)$ to a Dirichlet series
  $$\frac{1}{2}\Phi(s) =\frac{1}{2}\sum_{[K:F]=2} \frac{1}{\Nm(\Disc(K/F))^{2s} }\left(1+\sum_{[L:K] = 2}\frac{1}{\Nm(\Disc(L/K))^{2s}} \right),
$$
which has the same residue at the simple pole at $s = 1$ as $\Phi_{F,4}(D_4,s)$. We can then use Cohen--Diaz y Diaz--Olivier's results on counting on quadratic extensions of a number field to give a summation formula for the constant that verifies \cite[Remark 4]{cdod4}.

\section{Notation and Background}
In this section, we introduce the notation and background information we will be using throughout the article. 

For any number field $E$, let $\cO_E$ denote its ring of integers, let $r_1(E)$ denote the number of real embeddings of $E$, and let $r_2(E)$ denote the number of pairs of complex conjugate embeddings.

\subsection{$L$-functions associated to a number field}
For $s \in \bC$ such that $\Re(s) > 1$, the Dedekind zeta function of a number field $E$ is given by the Euler product running over nonzero primes ideals of $\cO_E$
$$\zeta_{E}(s) = \prod_{\fp \subseteq \cO_E} \frac{1}{1 - \Nm_{E/\bQ}(\fp)^{-s}}.$$ Hecke proved that $\zeta_E(s)$ has meromorphic continuation to the entire complex plane with only a simple pole at $s = 1$. 

For a character $\chi$ of a ray class group $\mathcal{C}\ell_{\mathfrak{m}}(F)$ of conductor $\fm$ of $E$, let $L_E(s,\chi)$ denote the $L$-function associated to the character $\chi$, namely for $\Re(s)>1$,
$$L_E(s,\chi) = \sum_{\substack{\mathfrak{a} \subset \mathcal{O}_E \\ (\mathfrak{a},\mathfrak{m})=1}} \frac{\chi(\mathfrak{a})}{\Nm_{E/\bQ}(\mathfrak{a})^s}.$$ 
\begin{lemma} \label{2.1.1} 
If $\chi=\chi_0$ is the trivial character, then $L_E(s,\chi_0)$ has a simple pole at $s= 1$. Otherwise, $L_{E}(s,\chi)$ is analytic everywhere. 
\end{lemma}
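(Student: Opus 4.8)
The plan is to treat the two cases separately, in each case reducing to the analytic properties of Dedekind zeta functions recalled above (Hecke's continuation). For $\chi = \chi_0$, I would note that the Euler products of $L_E(s,\chi_0)$ and $\zeta_E(s)$ agree away from the finitely many primes dividing $\mathfrak{m}$, so that for $\Re(s) > 1$
\[
L_E(s,\chi_0) = \zeta_E(s) \prod_{\mathfrak{p} \mid \mathfrak{m}} \left(1 - \Nm_{E/\bQ}(\mathfrak{p})^{-s}\right).
\]
The finite product on the right is entire, and each of its factors is nonzero at $s = 1$ since $\Nm_{E/\bQ}(\mathfrak{p}) \geq 2$. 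As $\zeta_E(s)$ continues meromorphically to $\bC$ with its only singularity a simple pole at $s = 1$, the same holds for $L_E(s,\chi_0)$.

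For $\chi \neq \chi_0$, the assertion is Hecke's theorem on $L$-functions of ray class characters, which I would either invoke directly or establish by the classical theta-function argument. In the latter one multiplies $L_E(s,\chi)$ by suitable archimedean $\Gamma$-factors to form a completed function $\Lambda_E(s,\chi)$, writes $\Lambda_E(s,\chi)$ as a Mellin transform of a theta series attached to $\chi$ and summed over representatives of the integral ideals of $\mathcal{O}_E$, splits the Mellin integral at $1$, and applies the transformation law of the theta series (coming from Poisson summation on $\mathcal{O}_E$ together with a fundamental-domain argument for the unit group). This produces a meromorphic continuation of $\Lambda_E(s,\chi)$ to all of $\bC$ and a functional equation $s \leftrightarrow 1-s$. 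The only terms that can contribute poles are the constant terms of the two theta series, each of which is a nonzero multiple of $\sum_{C} \chi(C)$ over the ray class group $\mathcal{C}\ell_{\mathfrak{m}}(E)$; this vanishes by orthogonality of characters precisely because $\chi$ is nontrivial, so $\Lambda_E(s,\chi)$, hence $L_E(s,\chi)$, is entire. The finitely many Euler factors at primes dividing $\mathfrak{m}$ that are adjusted along the way are entire and introduce no pole at $s = 1$.

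A variant better adapted to the class-field-theoretic framework already in use: let $H_{\mathfrak{m}}/E$ be the ray class field, so the Artin map gives $\Gal(H_{\mathfrak{m}}/E) \cong \mathcal{C}\ell_{\mathfrak{m}}(E)$, and up to finitely many Euler factors at ramified primes $\zeta_{H_{\mathfrak{m}}}(s) = \prod_{\psi} L_E(s,\psi)$ over all characters $\psi$ of the ray class group; by Aramata--Brauer the quotient $\zeta_{H_{\mathfrak{m}}}(s)/\zeta_E(s)$, which up to such factors equals $\prod_{\psi \neq \chi_0} L_E(s,\psi)$, is entire. To pass from this product to the single factor $L_E(s,\chi)$ one identifies $L_E(s,\chi)$ with the Artin $L$-function of the one-dimensional Galois representation corresponding to $\chi$ under reciprocity and uses that these are entire for $\chi \neq \chi_0$. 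I expect the principal obstacle, should one want a fully self-contained proof, to be exactly this last point — holomorphy of each individual nontrivial $L_E(s,\chi)$ rather than merely of the product over all nontrivial characters — which is where Hecke's theta-function machinery (equivalently, Tate's thesis) is genuinely needed; in the write-up I would simply cite it.
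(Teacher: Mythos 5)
Your proposal matches the paper's proof: for $\chi=\chi_0$ you use the same identity $L_E(s,\chi_0)=\zeta_E(s)\prod_{\fp\mid\fm}\left(1-\Nm_{E/\bQ}(\fp)^{-s}\right)$ and the known analytic properties of $\zeta_E$, and for $\chi\neq\chi_0$ you invoke Hecke's theorem on ray class $L$-functions, which is exactly the result the paper cites from Neukirch (Chapter VII, Theorem 8.5). The additional sketches of the theta-function and class-field-theoretic arguments are correct but not needed, since the paper likewise simply cites the reference.
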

\begin{proof}
We see that 
\begin{equation}
\label{l-zeta}
L_E(s,\chi_0) = \zeta_E(s)\prod_{\fp \mid \fm} \left(1 - \Nm_{E/\bQ}(\fp)^{-s}\right),
\end{equation}
and the conclusion follows from the properties of $\zeta_E(s)$ as the product over $\fp \mid \fm$ is finite. For nontrivial $\chi$, the result follows from \cite[Chapter VII, Theorem 8.5]{neukirch}.
\end{proof}

If there is a pole at $s = s_0$, and we write $L_E(s_0,\chi)$ or $\zeta_E(s_0)$, we are abusing notation and referring to the residue at the pole. 

The Lindel{\"o}f hypothesis would give that
\begin{equation}
\zeta_E \left(\frac{1}{2} + it\right) \ll t^{\epsilon},
\label{lindelof=1}
\end{equation} and a similar bound holds for $s=\sigma + it$ and $\sigma > \frac{1}{2}$.

\subsection{Counting $G$-extensions of a number field and associated Dirichlet series}
Fix a finite transitive subgroup $G$  of $S_n$ for some $n \geq 2$. Let $\mathcal{F}_{E,m}(G)$ denote the set of $E$-isomorphism classes of extensions $E'/E$ of degree $m$ (in a fixed algebraic closure of $E$) such that the Galois group of the Galois closure of $E'/E$ is isomorphic to $G$.

Denote by $N_{E,m}(G,X)$ the number of elements of $\mathcal{F}_{E,m}(G)$ whose discriminant is bounded by $X$; namely
$$N_{E,m}(G,X) =\# \{ E' \in \mathcal{F}_{E,m}(G) ; \, \Nm_{E/\bQ}(\Disc(E'/E)) \leq X\}. $$

In order to obtain asymptotic formulas for $N_{E,m}(G,X)$, we need to understand the analytic properties of the function $\Phi_{E,m}(G,s)$ defined by 
\[\Phi_{E,m}(G,s) = \sum_{E' \in \mathcal{F}_{E,m}(G)} \frac{1}{\Nm_{E/\Q}(\Disc(E'/E))^{s}}.\]

Using Perron's formula, we will obtain the main term in the asymptotic formula for $N_{E,m}(G,X)$ from the right-most pole of $\Phi_{E,m}(G,s)$, while the error term will be dictated by how much information we have about the meromorphic continuation of $\Phi_{E,m}(G,s)$.

\section{Counting quadratic extensions of quadratic extensions of a number field}

For the rest of the paper, we fix a number field $F$ and set $d= [F:\Q].$ We are interested in computing an asymptotic formula for $N_{F,4}(D_4,X)$, and our strategy is to study $\Phi_{F,4}(D_4,s)$ by relating it to the Dirichlet series associated to counting quadratic extensions of quadratic extensions of $F$ of bounded discriminant norm. The equivalent of $\Phi_{F,m}(G,s)$ for quadratic extensions of quadratic extensions is the function 
\begin{equation}\tilde{\Phi}(s) = \sum_{[K:F]=2} \sum_{[L:K]=2} \frac{1}{\Nm_{F/\Q}(\Disc(L/F))^{s}}\label{phitilde}.
\end{equation}

We will first study the relationship of $\Phi_{F,4}(D_4,s)$ and $\tilde{\Phi}(s)$, showing that they only differ by a holomorphic function. We will then study the poles and residues of $\tilde{\Phi}(s)$ by instead considering a slightly altered Dirichlet series $\Phi(s)$ that again only differs from $\tilde{\Phi}(s)$ (and therefore also from $\Phi_{F,4}(D_4,s)$) by a holomorphic function. 

\subsection{Quadratic extensions of quadratic extensions}

For a quadratic extension $K$ of $F$, let $\cF_2(K)$ denote the $K$-isomorphism classes of quadratic extensions $L$ of $K$. There are three possibilities for the extension $L/F$ in this case:
\begin{itemize}
\item $L/F$ is a degree $4$ non-Galois extension whose normal closure has Galois group $D_4$ over $F$. These are the extensions we want to count.
\item $L/F$ is a Galois extensions with Galois group $C_4$, the cyclic group of order $4.$
\item $L/F$ is a Galois extension with Galois group $V_4 \simeq \Z/2 \times \Z/2$, the Klein four-group.
\end{itemize}

By \cite[Corollary 2.3(3)]{cdod4},
\begin{equation}\label{counting}
\sum_{K \in \cF_2(F)} \sum_{\substack{L \in \cF_2(K) \\ \Nm_{F/\bQ}(\Disc(L/F)) \leq X}} 1  \qquad = \quad 2N_{F,4}(D_4,X) + N_{F,4}(C_4,X) + 3N_{F,4}(V_4,X)
\end{equation}

To see this note that for a fixed quadratic extension $K$ of $F$, conjugate $D_4$-extensions of $F$ will be counted separately by the sum on the left, and every $D_4$-extension contains a single quadratic subextension (shared with its unique $F$-conjugate). The sum on the left counts $C_4$-extensions up to $F$-isomorphism just once since these extensions are Galois and contain a single quadratic subextension. Each $V_4$-extension $L/F$ contains three quadratic subfields, so the sum on the left is counting each $V_4$-extension up to $F$-isomorphism three times.

Hence, we can express the function $\Phi_{F,4}(D_4,s)$ we are interested in as 
\begin{equation}
\Phi_{F,4}(D_4,s) = \frac{1}{2} \left(\tilde{\Phi}(s)-\Phi_{F,4}(C_4,s)-3 \Phi_{F,4}(V_4,s) \right).
\label{phij}
\end{equation}

On the other hand, note that by the relative discriminant formula, if $K \in \cF_2(F)$ and $L \in \cF_2(K)$, then $\Disc(L/F) = \Disc(K/F)^2 \Nm_{K/F}(\Disc(L/K))$. Thus, we can write
\begin{align}\tilde{\Phi}(s) &= \sum_{[K:F]=2} \sum_{[L:K]=2} \frac{1}{\Nm_{F/\Q}(\Disc(K/F))^{2s} \Nm_{K/\bQ}(\Disc(L/K))^{s}} \nonumber \\
&= \sum_{[K:F]=2} \frac{1}{\Nm_{F/\bQ}(\Disc(K/F))^{2s}} \sum_{[L:K]=2} \frac{1}{\Nm_{K/\bQ}(\Disc(L/K))^{s}} \nonumber \\
&= \sum_{[K:F]=2} \frac{\Phi_{K,2}(C_2,s)}{\Nm_{F/\bQ}(\Disc(K/F))^{2s}}\label{phitildebetter}.
\end{align}

\subsection{Counting using Dirichlet series}
\label{dirichlet}

The previous section indicates that in order to study $\tilde{\Phi}(s)$, we will be compelled to deal with not just $\Phi_{F,4} (D_4, s)$, but also $\Phi_{F,4} (C_4, s)$ and $\Phi_{F,4} (V_4, s)$ that appears in \eqref{phij}, which were first considered by  Wright \cite{wright}. By \cite[Theorem I.1]{wright}, it is known that $\Phi_{F,4}(C_4,s)$ and  $\Phi_{F,4}(V_4,s)$ are absolutely convergent for $\Re(s)>1/2$. Using these functions, Wright proved that 
  $$ N_{F,4}(C_4,X) = O\left(X^{1/2}\right)$$
and
  $$ N_{F,4}(V_4,X) = O\left(X^{1/2}\log^2(X)\right).$$
The functions $\Phi_{F,4}(C_4,s)$ and $\Phi_{F,4}(V_4,s)$ can also be rewritten in terms of $L$-functions as given below.

\begin{proposition}[{\cite[Theorem 4.3]{cdoc4}}]\label{propc4}
We have that
\begin{equation}
\Phi_{F,4}(C_4,s) = \frac{2^{r_1(F)+r_2(F)-1}}{4^{2[F:\bQ]s} \zeta_F(2s)}\sum_{\mathfrak{c} | 4 \cO_F} \Nm(\mathfrak{c})^{2s} \sum_{\chi} L_F(2s,\chi) F_{\mathfrak{c},\chi}(s),
\label{c4}
\end{equation}where the sum over $\chi$ is over quadratic characters of $\Cl_{\mathfrak{c}}(F)$, the sum over $\fc$ is over ideals dividing $(4)$, and where $F_{\mathfrak{c},\chi}(s)$ is defined at the bottom of page $499$ in \cite{cdoc4} and converges for $\Re(s)>1/3.$ (Note that $F_{\mathfrak{c},\chi}(s)$  is given in terms of a sum over quadratic extensions $k/F$ which are embeddable into a $C_4$-extension.)
\end{proposition}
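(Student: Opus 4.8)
This proposition is, up to a reconciliation of conventions, \cite[Theorem 4.3]{cdoc4}: one must match the normalization of absolute norms of relative discriminants used there and observe that the Euler factors at the primes of $F$ above $2$ have been absorbed into $F_{\mathfrak{c},\chi}(s)$. So the proof is, in essence, a pointer to \cite{cdoc4}. If one wanted to reprove it, the plan would be to parametrize $C_4$-quartic extensions of $F$ by class field theory over their quadratic resolvents and then unwind the resulting Dirichlet series; I sketch this below.

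Every $C_4$-quartic extension $L/F$ contains a unique quadratic subextension $k/F$, and $L/k$ is quadratic; moreover $L/F$ is the same datum as a continuous character $\phi\colon G_F\to\Z/4\Z$ of exact order $4$, where $\phi^2=:\chi$ is the quadratic character cutting out $k$, and $k$ must be one of the quadratic extensions of $F$ that embed into a $C_4$-extension. By the conductor--discriminant formula, $\Disc(L/F)=\mathfrak{f}(\phi)\,\mathfrak{f}(\phi^2)\,\mathfrak{f}(\phi^3)=\mathfrak{f}(\phi)^2\,\mathfrak{f}(\chi)$ since $\phi^3=\overline{\phi}$ is conjugate to $\phi$, and taking absolute norms, $\Nm_{F/\bQ}(\Disc(L/F))=\Nm_{F/\bQ}(\mathfrak{f}(\phi))^{2}\,\Nm_{F/\bQ}(\mathfrak{f}(\chi))$. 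Hence
\[
\Phi_{F,4}(C_4,s)=\sum_{\phi:\ \mathrm{ord}(\phi)=4}\frac{1}{\Nm_{F/\bQ}(\mathfrak{f}(\phi))^{2s}\,\Nm_{F/\bQ}(\mathfrak{f}(\phi^2))^{s}},
\]
the sum running over order-$4$ characters of ray class groups of $F$.

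To organize this sum, I would first fix the behavior at the primes above $2$: the conductor of $\phi$ there is constrained by wild-ramification bounds, which produces the outer sum over divisors $\fc\mid 4\cO_F$ and the factor $4^{-2[F:\bQ]s}$ (a uniform bound of the form $\Nm_{F/\bQ}(4\cO_F)^{2s}$ on the $2$-adic conductor). Away from $2$, $\mathfrak{f}(\phi)$ differs from $\mathfrak{f}(\chi)$ by a squarefree ideal coprime to $2\,\mathfrak{f}(\chi)$ recording the primes where $\phi$ ramifies but $\chi$ does not; the local solvability conditions for the associated embedding problem express admissibility of such a prime through the values of an attached quadratic character $\chi$ of $\Cl_{\fc}(F)$, and summing over these squarefree ideals -- together with a M\"obius sieving to enforce squarefreeness -- produces the factor $L_F(2s,\chi)\,\zeta_F(2s)^{-1}$. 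The finitely many remaining $2$-adic possibilities, bundled with the residual sum over the admissible quadratic resolvents $k$, are exactly what is packaged into $F_{\mathfrak{c},\chi}(s)$, which is absolutely convergent for $\Re(s)>1/3$ because the surviving sum ranges over essentially squarefree data prime to $2$. Finally, $2^{r_1(F)+r_2(F)-1}$ is the standard global normalization constant (sign choices at the $r_1(F)$ real places, the order of the relevant $2$-torsion, and a correction for the trivial character).

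The step I expect to be the main obstacle is precisely the analysis at the primes of $F$ above $2$: there $\phi$ can be wildly ramified, $\mathfrak{f}(\phi)$ is not the squarefree radical of anything, and one has to enumerate the finitely many local conductor exponents and check their compatibility with the global condition that $k$ embed into a $C_4$-extension. This is the computation that produces the auxiliary modulus $\fc\mid4\cO_F$, the factor $4^{-2[F:\bQ]s}$, and the correction series $F_{\mathfrak{c},\chi}(s)$ in the statement, and it is the point at which deferring to \cite{cdoc4}, where it is carried out in detail, is by far the most economical choice.
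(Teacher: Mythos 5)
Your proposal is correct and matches the paper's treatment: the paper offers no proof of this proposition, deferring entirely to \cite[Theorem 4.3]{cdoc4}, exactly as your first paragraph does. The class-field-theoretic sketch you append (order-$4$ characters, conductor--discriminant formula, separate handling of the primes above $2$) is a fair outline of the argument carried out in that reference, but it is not needed here.
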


\begin{proposition}[{\cite[Theorem 5.1]{cdov4}}]\label{propv4}
We have that
$$\Phi_{F,4}(V_4,s) = \frac{1}{6}\Psi(s) - \frac{1}{2} \Phi_{F,2}(C_2,2s) - \frac{1}{6}$$
where
\begin{align}
\Psi(s) &= \frac{2^{2r_1(F)+2r_2(F)}}{2^{6[F:\bQ]s}} \nonumber \\
& \times \sum_{\substack{\mathfrak{n} | 2 \cO_F \\ \mathfrak{m} | 2 \cO_F}} \Nm(\mathfrak{nm})^{2s-1}   \sum_{\mathfrak{c} | \mathfrak{n}}\frac{\mu_F(\mathfrak{n}/\mathfrak{c})}{\Nm(\mathfrak{n}/\mathfrak{c})} \Nm((\mathfrak{m},\mathfrak{c}))^{2s} \prod_{\substack{\fp \text{ s.t. } \\ 1 \leq v_{\mathfrak{p}}(\mathfrak{m}) \leq v_{\mathfrak{p}}(\mathfrak{c})}}\left(1- \Nm(\mathfrak{p})^{-4s} \right) \prod_{\mathfrak{p}|\mathfrak{m}/(\mathfrak{m},\mathfrak{c})} \left(1- \Nm(\mathfrak{p})^{-2s} \right) \nonumber \\
& \times  \sum_{\chi, \psi} \sum_{\fq \mid \mathfrak{m}/(\mathfrak{m},\mathfrak{c})} L_F(2s,\chi_{1,\fq} \chi_{2,\fq}) L_F(2s,\chi_{1,\fq} \psi)L_F(2s,\chi_{2,\fq} \psi) H_\fq(s,\chi,\psi), \label{fs}
\end{align}
where $\mathfrak{n}, \mathfrak{m}$, and $\mathfrak{c}$ run over integral ideals, $\fp$ and $\fq$ run over prime ideals,  $\chi$ and $\psi$ run over all quadratic characters of $\Cl_{\fm}(F)$ and $\Cl_{\fn}(F)$ respectively, and $H_\fq(\chi,\psi,s)$ is given by an Euler product that converges for $\Re(s)>1/4$.
\end{proposition}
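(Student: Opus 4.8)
The statement is a closed-form identity for the Dirichlet series $\Phi_{F,4}(V_4,s)$, so the plan is to parametrize $V_4$-extensions of $F$ via Kummer theory, convert discriminants into conductors by the conductor--discriminant formula, and unfold the resulting sum over quadratic characters into a product of Hecke $L$-functions over $F$. Since $\mu_2=\{\pm1\}\subset F$, Kummer theory gives a bijection between $V_4$-extensions $L/F$ (up to $F$-isomorphism, equivalently as subfields of $\overline F$ since they are Galois) and $2$-dimensional $\F_2$-subspaces $W\subseteq F^\times/(F^\times)^2$; the three nontrivial classes $a,b,ab\in W$ cut out the quadratic subfields $F(\sqrt a),F(\sqrt b),F(\sqrt{ab})$, and conductor--discriminant reads $\Disc(L/F)=\mathfrak f(\chi_a)\,\mathfrak f(\chi_b)\,\mathfrak f(\chi_{ab})$, where $\chi_a$ is the quadratic character attached to $F(\sqrt a)$ and $\mathfrak f(\chi_a)=\Disc(F(\sqrt a)/F)$. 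Each ordered basis $(a,b)$ covers a given $W$ exactly $|\GL_2(\F_2)|=6$ times, so summing over all pairs $(a,b)\in (F^\times/(F^\times)^2)^2$ and peeling off the degenerate families (one of $a$, $b$, $ab$ trivial --- three such families, each collapsing to a single squared conductor and hence to $\Phi_{F,2}(C_2,2s)$ --- plus the all-trivial pair contributing $1$) yields
\[\Psi(s):=\sum_{a,b}\frac{1}{\Nm\!\big(\mathfrak f(\chi_a)\,\mathfrak f(\chi_b)\,\mathfrak f(\chi_{ab})\big)^{s}}\;=\;6\,\Phi_{F,4}(V_4,s)+3\,\Phi_{F,2}(C_2,2s)+1,\]
which is precisely the claimed relation $\Phi_{F,4}(V_4,s)=\tfrac16\Psi(s)-\tfrac12\Phi_{F,2}(C_2,2s)-\tfrac16$ once $\Psi$ is identified with the explicit expression in the proposition.

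The next step evaluates $\Psi(s)$ by sorting quadratic characters according to their conductors. A quadratic character of $F$ ramifies at most tamely away from $2\cO_F$ and has $2$-part of conductor dividing a fixed power of $2\cO_F$, so I would introduce ideals $\mathfrak n,\mathfrak m\mid 2\cO_F$ to record these $2$-parts and replace Kummer elements by quadratic characters of the ray class groups $\Cl_{\mathfrak n}(F)$ and $\Cl_{\mathfrak m}(F)$. Möbius inversion over the divisors $\mathfrak c\mid\mathfrak n$ of $2\cO_F$ (the source of the factor $\mu_F(\mathfrak n/\mathfrak c)$) converts ``conductor divides'' into ``conductor equals'', and applying $\sum_{\mathfrak a}\chi(\mathfrak a)\Nm(\mathfrak a)^{-s}=L_F(s,\chi)$ once for each of the three characters $\chi_a,\chi_b,\chi_{ab}$ in a given term produces the triple product $L_F(2s,\chi_{1,\fq}\chi_{2,\fq})\,L_F(2s,\chi_{1,\fq}\psi)\,L_F(2s,\chi_{2,\fq}\psi)$ of \eqref{fs}. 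The argument $2s$ rather than $s$ appears because at every odd prime exactly two of $\chi_a,\chi_b,\chi_{ab}$ ramify (or none do), so $\mathfrak f(\chi_a)\mathfrak f(\chi_b)\mathfrak f(\chi_{ab})$ is a perfect square away from $2\cO_F$ --- the same phenomenon that makes $\Phi_{F,2}(C_2,2s)$, not $\Phi_{F,2}(C_2,s)$, the natural object, and the analogue of $\Disc(\bQ(\sqrt d))$ being $d$ or $4d$ for squarefree $d$. The standard dictionary between $F^\times/(F^\times)^2$ and ray class characters, whose discrepancy is governed by the $2$-rank $r_1(F)+r_2(F)$ of $\cO_F^\times$, produces the archimedean factor $2^{2r_1(F)+2r_2(F)}$ (squared because there are two independent characters $\chi,\psi$), while the contribution of the primes above $2$ that is uniform across all terms is absorbed into the normalization $2^{-6[F:\bQ]s}$.

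The delicate part --- and the one I expect to be the main obstacle --- is the local analysis at the primes $\fp\mid 2$. For odd $\fp$ everything is elementary: each of $\chi_a,\chi_b$ is at worst tame, $\chi_a\chi_b$ is unramified at $\fp$ when $\chi_a,\chi_b$ share their (unique) ramified local component and otherwise inherits whichever is ramified, so $v_\fp\!\big(\mathfrak f(\chi_a)\mathfrak f(\chi_b)\mathfrak f(\chi_{ab})\big)\in\{0,2\}$ and the local Euler factor is a rational function of $\Nm(\fp)^{-s}$. At $\fp\mid 2$ the conductor exponents of quadratic characters can be large and wild, and crucially the three exponents $v_\fp(\mathfrak f(\chi_a))$, $v_\fp(\mathfrak f(\chi_b))$, $v_\fp(\mathfrak f(\chi_{ab}))$ are not independent: one must enumerate the finitely many classes in $F_\fp^\times/(F_\fp^\times)^2$, record the conductor of each associated local quadratic character, and tabulate the joint behaviour under multiplication. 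This bookkeeping is exactly what produces the correction factors $\prod_{\fp:\,1\le v_\fp(\mathfrak m)\le v_\fp(\mathfrak c)}(1-\Nm(\fp)^{-4s})$ and $\prod_{\fp\mid\mathfrak m/(\mathfrak m,\mathfrak c)}(1-\Nm(\fp)^{-2s})$, the inner sum over $\fq\mid\mathfrak m/(\mathfrak m,\mathfrak c)$ (separating off a twist supported at such $\fq$), and the residual Euler product $H_\fq(s,\chi,\psi)$. To finish I would bound the local factors of $H_\fq$ to see it converges for $\Re(s)>1/4$: each is $1+O(\Nm(\fp)^{-1-4(s-1/4)})$, the gain of one power past the naive $\Nm(\fp)^{-s}$ per conductor condition coming from cancellation among the square-class contributions. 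As a sanity check, all poles of $\Psi(s)$ are carried by the $L_F(2s,\cdot)$-factors, and all three can be principal simultaneously (e.g.\ $\chi_{1,\fq}=\chi_{2,\fq}=\psi$ of order dividing $2$), so $\Phi_{F,4}(V_4,s)$ acquires a pole of order $3$ at $s=1/2$, consistent with Wright's exponent $b_{F,4}(V_4)=\tfrac{n_2}{[F(\zeta_2):F]}-1=2$.
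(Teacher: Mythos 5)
This proposition is imported verbatim from Cohen--Diaz y Diaz--Olivier \cite[Theorem 5.1]{cdov4}; the paper supplies no proof of it, so there is no in-paper argument to compare against, and your strategy is essentially that of the cited source. The first half of your argument is complete and correct: Kummer theory identifies $V_4$-extensions with two-dimensional subspaces $W \subseteq F^\times/(F^\times)^2$, the conductor--discriminant formula expresses $\Disc(L/F)$ as the product of the three quadratic conductors, each $W$ admits exactly $6$ ordered bases, and the degenerate pairs contribute $3\Phi_{F,2}(C_2,2s)+1$ (three one-dimensional families, each collapsing to a squared conductor, plus the trivial pair). This yields the stated relation among $\Phi_{F,4}(V_4,s)$, $\Psi(s)$, and $\Phi_{F,2}(C_2,2s)$ exactly, and your sanity checks (conductor exponent $0$ or $2$ at odd primes, hence $L$-functions at argument $2s$; an order-$3$ pole at $s=1/2$ matching Wright's $X^{1/2}\log^2 X$) are sound.

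The second half --- identifying your character-sum definition of $\Psi(s)$ with the explicit expression \eqref{fs} --- is where essentially all of the content of the proposition lies, and there your write-up is a road map rather than a proof. You correctly locate the difficulty (the joint behaviour of the conductor exponents at $\fp\mid 2$, the M\"obius inversion converting ``conductor divides'' into ``conductor equals'', the unit-group factor $2^{2r_1(F)+2r_2(F)}$), but you never define the auxiliary characters $\chi_{1,\fq}$ and $\chi_{2,\fq}$, nor derive the specific correction factors $\prod\left(1-\Nm(\fp)^{-4s}\right)$ and $\prod\left(1-\Nm(\fp)^{-2s}\right)$, the role of the inner sum over $\fq\mid\fm/(\fm,\fc)$, or the local factors of $H_\fq$; all of these require the explicit enumeration of $F_\fp^\times/(F_\fp^\times)^2$ at $\fp\mid 2$ that you explicitly defer. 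Since the paper itself defers exactly this computation to \cite{cdov4}, the appropriate resolution is either to carry out that local analysis in full or to do what the paper does and cite it.
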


\begin{cor}
The poles and the residues of $\Phi_{F,4}(D_4,s)$ are the same as the poles of $\frac{1}{2}\tilde{\Phi}(s)$.
\end{cor}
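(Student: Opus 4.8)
The statement is an immediate consequence of the relation \eqref{phij}. Rewriting that identity as
$$\Phi_{F,4}(D_4,s) - \frac{1}{2}\tilde\Phi(s) = -\frac{1}{2}\Phi_{F,4}(C_4,s) - \frac{3}{2}\Phi_{F,4}(V_4,s),$$
it suffices to show that the right-hand side is holomorphic throughout the region in which the poles of $\tilde\Phi(s)$ (equivalently, by this identity, of $\Phi_{F,4}(D_4,s)$) relevant to counting $D_4$-quartic fields are located.

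First I would invoke \cite[Theorem I.1]{wright}, which gives that $\Phi_{F,4}(C_4,s)$ and $\Phi_{F,4}(V_4,s)$ are absolutely convergent Dirichlet series, hence holomorphic, for $\Re(s) > \tfrac12$. (The same can be read off from Propositions \ref{propc4} and \ref{propv4}: for $\Re(s) > \tfrac12$, each $L_F(2s,\chi)$ with $\chi$ nontrivial is entire, the quotient $L_F(2s,\chi_0)/\zeta_F(2s)$ is a finite Euler product by \eqref{l-zeta}, $\zeta_F(2s)$ has no zeros, and the correction factors $F_{\fc,\chi}(s)$, $H_\fq(s,\chi,\psi)$, and $\Phi_{F,2}(C_2,2s)$ are all holomorphic there.) Hence the right-hand side of the displayed identity is holomorphic on $\Re(s) > \tfrac12$, and therefore $\Phi_{F,4}(D_4,s)$ and $\tfrac12\tilde\Phi(s)$ have the same poles, with the same principal parts — in particular the same residues — in that half-plane. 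This is exactly the region we need: the main term comes from the pole at $s = 1$, and the power-saving error term from shifting the contour of integration to a line $\Re(s) = \tfrac34 + \epsilon$, which still lies in $\Re(s) > \tfrac12$.

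There is no real obstacle here; the only point requiring a little care is to match the half-plane of holomorphy of the $C_4$ and $V_4$ contributions with the location of the poles of $\tilde\Phi(s)$ that matter. (Note that the identification of poles does fail further to the left: $\Phi_{F,4}(V_4,s)$ has a genuine pole at $s = \tfrac12$ — consistent with $N_{F,4}(V_4,X)$ having a main term of size $X^{1/2}(\log X)^2$ — but $s = \tfrac12$ lies outside the range needed for the stated error term and so is harmless.) The precise pole structure of $\tilde\Phi(s)$ in $\Re(s) > \tfrac12$, namely a single simple pole at $s = 1$, is established in the following subsections by passing to the auxiliary Dirichlet series $\Phi(s)$.
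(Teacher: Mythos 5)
Your proposal is correct and follows exactly the paper's route: the paper's own proof is the one-line observation that the corollary follows from equation \eqref{phij} together with Wright's Theorem I.1 (or Propositions \ref{propc4} and \ref{propv4}), which is precisely the argument you spell out in more detail. Your additional remarks on the half-plane $\Re(s)>\tfrac12$ and the behaviour at $s=\tfrac12$ are accurate elaborations, not deviations.
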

\begin{proof}
This follows directly from equation \eqref{phij} and work of Wright (see \cite[Theorem I.1]{wright}) or Propositions \ref{propc4} and \ref{propv4}.
\end{proof}

It will be more convenient to work with the function
\begin{equation}
\Phi(s) = \sum_{[K:F]=2} \frac{1}{\Nm(\Disc(K/F))^{2s} }\left(1+\Phi_{K,2}(C_2,s) \right) \label{phis}
\end{equation}
rather than the function $\tilde{\Phi}(s)$ defined in \eqref{phitilde}.
From \cite[Theorem 1.1]{cdod4}, we have
\begin{equation}
1+\Phi_{K,2}(C_2,s) = \frac{2^{-r_2(K)}}{\zeta_{K}(2s)} \sum_{\mathfrak{c} \mid 2\cO_K} \Nm(2/\mathfrak{c})^{1-2s} \sum_{\chi} L_{K}(s,\chi),
\label{eq1}
\end{equation}where $\mathfrak{c}$ runs over integral ideals and $\chi$ runs over all the quadratic characters of the ray class group $\Cl_{\mathfrak{c}^2}(K)$. Note that we would not expect a summation formula in terms of $L$-functions like \eqref{eq1} to exist if we did not add 1 to $\Phi_{K,2}(C_2,s)$, and so computing the residue of $\tilde{\Phi}(s)$ would be much more difficult to do directly as we would not be able to utilize the study of $L_K(s,\chi)$. 

By equation \eqref{phij}, $\Phi(s)$ can be expressed as
\begin{equation}\label{phik}
 \Phi(s)= 2\Phi_{F,4}(D_4,s) +\Phi_{F,2}(C_2,2s)+\Phi_{F,4}(C_4,s)+3 \Phi_{F,4}(V_4,s).
\end{equation}
 
From equation \eqref{eq1} we also have 
\begin{equation}
\Phi_{F,2}(C_2,s)= -1+ \frac{2^{-r_2(F)}}{\zeta_F(2s)} \sum_{\mathfrak{c}\mid 2\cO_F} \frac{\Nm(2/\mathfrak{c})}{\Nm(2/\mathfrak{c})^{2s}}\sum_{\chi} L_{F}(s,\chi),
\label{c2}
\end{equation}
where $\mathfrak{c}$ runs over all integral ideals of $F$ dividing $2$ and $\chi$ runs over all quadratic characters of the ray class group $\mathcal{C}\ell_{\mathfrak{c}^2}(F)$ modulo $\mathfrak{c}^2$. 

We now focus on $\Phi(s)$. It follows from \eqref{eq1} and Lemma \ref{2.1.1} that $1+\Phi_{K,2}(C_2,s)$ is meromorphic in the complex plane and has a simple pole at $s=1$. If we denote by $R$ the residue of $\Phi(s)$ at $s=1$, by using the convexity bound
$$L_K(s,\chi) \ll \Nm (\Disc(K/F))^{\frac{1}{2}},$$ combined with equations \eqref{phis} and \eqref{eq1}, it follows that $\Phi(s)-R$ is absolutely convergent for $\Re(s) \geq 3/4+\epsilon.$ From equations \eqref{phik} and \eqref{c2},  in conjunction with Propositions \ref{propc4} and \ref{propv4} we also get the following.

\begin{prop}\label{mercontphif4}
The function $\Phi_{F,4}(D_4,s)$ has meromorphic continuation to the whole complex plane and has a simple pole at $s=1$. Furthermore, $\frac{1}{2}\Phi(s)$ and $\Phi_{F,4}(D_4,s)$ have the same residue at the simple pole at $s = 1$. If $\Phi(1)$ denotes the residue of $\Phi(s)$ at the pole $s=1$, then $\Phi_{F,4}(D_4,s)-\frac{\Phi(1)}{2}$ is absolutely convergent for $\Re(s) \geq 3/4+\epsilon$.
\end{prop}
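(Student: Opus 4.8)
The plan is to read the proposition off from the identity \eqref{phik}, which after solving for $\Phi_{F,4}(D_4,s)$ reads
\[
\Phi_{F,4}(D_4,s)=\tfrac12\Bigl(\Phi(s)-\Phi_{F,2}(C_2,2s)-\Phi_{F,4}(C_4,s)-3\,\Phi_{F,4}(V_4,s)\Bigr),
\]
so everything reduces to controlling the four terms on the right, in particular on $\Re(s)\ge 3/4+\epsilon$. The three ``error'' terms are harmless near $s=1$: by \eqref{c2} and Lemma \ref{2.1.1}, $\Phi_{F,2}(C_2,2s)$ is a finite $\bZ$-linear combination of translates of Hecke $L$-functions over $F$ divided by $\zeta_F(2s)$, hence meromorphic on $\bC$ with only a (simple) pole at $s=1/2$ and with absolutely convergent Dirichlet series for $\Re(s)>1/2$; and by Wright's \cite[Theorem I.1]{wright} (or from Propositions \ref{propc4} and \ref{propv4}), $\Phi_{F,4}(C_4,s)$ and $\Phi_{F,4}(V_4,s)$ also continue meromorphically to $\bC$ and are absolutely convergent for $\Re(s)>1/2$. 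In particular all three are holomorphic on $\Re(s)\ge 3/4$ and contribute nothing to a pole at $s=1$.

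The pole, and essentially all the work, is in $\Phi(s)$. By \eqref{phis} and \eqref{eq1}, $\Phi(s)=\sum_{[K:F]=2}\Nm(\Disc(K/F))^{-2s}\bigl(1+\Phi_{K,2}(C_2,s)\bigr)$, with
\[
1+\Phi_{K,2}(C_2,s)=\frac{2^{-r_2(K)}}{\zeta_K(2s)}\sum_{\fc\mid 2\cO_K}\Nm(2/\fc)^{1-2s}\sum_{\chi}L_K(s,\chi),
\]
a finite sum over $\fc$ and over quadratic characters $\chi$ of $\Cl_{\fc^2}(K)$. By \eqref{l-zeta} and Lemma \ref{2.1.1} each such inner factor is meromorphic on $\bC$ with a single simple pole at $s=1$, coming from the trivial characters, whose residue $\rho_K$ is a positive combination of the numbers $\zeta_K(1)\prod_{\fp\mid\fc}(1-\Nm(\fp)^{-1})$, hence strictly positive. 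The inputs that make the sum over $K$ work, uniformly on $\Re(s)\ge 3/4$, are: the convexity bound $L_K(s,\chi)\ll\Nm(\Disc(K/F))^{1/2}$ (as recalled before the proposition; it is uniform because the conductors of the characters $\chi$ at hand divide $4\cO_K$ and so have $\bQ$-norm bounded in terms of $F$ alone, whence only $\Disc(K/F)$ enters the analytic conductor), the uniform bounds $|\zeta_K(2s)^{-1}|\le\zeta(3/2)^{2d}$ and $\Nm(2/\fc)^{1-2s}\ll 1$, the residue bound $\rho_K\ll_\epsilon\Nm(\Disc(K/F))^\epsilon$ (the residue of a Dedekind zeta at $s=1$ is $O_\epsilon(|\Disc|^\epsilon)$), and the trivial $O_\epsilon(\Nm(\Disc(K/F))^\epsilon)$ bounds for the number of ideals $\fc\mid 4\cO_K$ and of characters $\chi$. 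Feeding these in and using the count $\#\{K:[K:F]=2,\ \Nm(\Disc(K/F))\le Y\}=O_\epsilon(Y^{1+\epsilon})$ (a consequence of the asymptotic count of quadratic extensions of $F$), the series $\sum_K\rho_K\,\Nm(\Disc(K/F))^{-2s}$ of principal parts converges absolutely for $\Re(s)>1/2$, while the series obtained from $\Phi(s)$ by subtracting from each $K$-summand its principal part at $s=1$ converges absolutely on $\Re(s)\ge 3/4+\epsilon$, because there the net exponent of $\Nm(\Disc(K/F))$ is $<-1$. Hence $\Phi(s)$ is meromorphic on $\bC$, with a simple pole at $s=1$ of residue $\Phi(1)=\sum_K\rho_K\,\Nm(\Disc(K/F))^{-2}>0$, and $\Phi(s)$ minus the principal part $\Phi(1)/(s-1)$ is absolutely convergent on $\Re(s)\ge 3/4+\epsilon$.

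Combining the two paragraphs through the displayed identity: $\Phi_{F,4}(D_4,s)$ is meromorphic on $\bC$; at $s=1$ only $\Phi(s)$ is singular, so $\Phi_{F,4}(D_4,s)$ has a simple pole there of residue $\tfrac12\Phi(1)$, which is also the residue of $\tfrac12\Phi(s)$; and after subtracting the principal part $\tfrac{\Phi(1)/2}{s-1}$ of that pole, the result is absolutely convergent on $\Re(s)\ge 3/4+\epsilon$, since each of the four summands has that property there. I expect the only real obstacle to be the uniformity in $K$ in the middle paragraph: one must run the convexity bound for $L_K(s,\chi)$ with its dependence on $\Disc(K/F)$ made fully explicit — using that the relevant conductors are bounded in terms of $F$ alone — and then balance it against the (near-)linear count of quadratic extensions $K/F$ and the weight exponent $2s$, which is exactly what pins the region of convergence at $\Re(s)>3/4$ (and hence the exponent $3/4$ in Theorem \ref{main}). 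Establishing the meromorphic continuation of the $K$-sum to \emph{all} of $\bC$, as opposed to the region $\Re(s)\ge 3/4$ that Theorem \ref{main} actually uses, is a more delicate but standard matter that additionally relies on the functional equations of the Hecke $L$-functions $L_K(s,\chi)$; everything else is bookkeeping with \eqref{phik}, \eqref{c2}, \eqref{eq1}, Lemma \ref{2.1.1}, and Propositions \ref{propc4}--\ref{propv4}.
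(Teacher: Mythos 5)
Your proposal follows essentially the same route as the paper: solve \eqref{phik} for $\Phi_{F,4}(D_4,s)$, dispose of $\Phi_{F,2}(C_2,2s)$, $\Phi_{F,4}(C_4,s)$, and $\Phi_{F,4}(V_4,s)$ via \eqref{c2} and Propositions \ref{propc4}--\ref{propv4}, and then control $\Phi(s)$ through \eqref{phis}, \eqref{eq1}, Lemma \ref{2.1.1}, and the convexity bound $L_K(s,\chi)\ll \Nm(\Disc(K/F))^{1/2}$ balanced against the count of quadratic extensions $K/F$, which pins down absolute convergence for $\Re(s)\geq 3/4+\epsilon$. Your write-up is somewhat more explicit about the uniformity in $K$ than the paper's brief derivation, but the argument is the same.
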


\section{Proof of the main theorem}

We will now prove Theorem \ref{main}, originally stated in Remark 4 of \cite[Corollary 6.1]{cdod4}. We denote $c_{F,4}(D_4)$ the constant in the main term, i.e.
$$c_{F,4}(D_4) = \sum_{[K:F]=2} \frac{1}{2^{r_2(K)+1} \Nm(\Disc(K/F))^2} \frac{\zeta_{K}(1)}{\zeta_{K}(2)}.$$
Recall that $\zeta_{K}(1)$ denotes the value of the residue of the Dedekind zeta function of $K$ at $s=1.$  With this notation, we need to prove that
$$ N_{F,4} (D_4,X) = c_{F,4}(D_4)X + O (X^{1-\frac{1}{2d}+\epsilon})$$
where $d = [F:\mathbb{Q}]$.
By Perron's formula, we have that
\begin{equation}
N_{F,4}(D_4,X) = \frac{1}{2 \pi i} \int_{\Re(s) =2} \Phi_{F,4}(D_4,s) \frac{X^s}{s}\, ds.
\label{perron}
\end{equation}
When we shift the contour to the left to the line $\Re(s) = 1 + \epsilon$, we do not encounter any poles. So we can rewrite
$$N_{F,4}(D_4,X) = \frac{1}{2 \pi i} \int_{\Re(s) =1+\epsilon} \Phi_{F,4}(D_4,s) \frac{X^s}{s}\, ds.$$
Using standard contour integration, we get that
\begin{equation}
N_{F,4}(D_4,X) = \frac{1}{2 \pi i} \int_{1+\epsilon-iT}^{1+\epsilon+iT} \Phi_{F,4}(D_4,s) \frac{X^s}{s}\, ds+O \left(\frac{X^{1+\epsilon}}{T} \right),
\label{contour}
\end{equation}
and we will choose $T$ suitably. We shift the line of integration to $\Re(s)=3/4+\epsilon$ and encounter the simple pole at $s=1$. The residue of the pole at $s=1$ will contribute the main term which will be of size $X$, and the error term will come from bounding the contribution of the integrals on the other three sides of the contour of integration.

Next we bound the integral on the two horizontal lines of the contour and on the vertical line from $3/4+\epsilon-iT$ and $3/4+\epsilon+iT$. The integral on the horizontal lines will be bounded by $X^{1+\epsilon}/T$ and for the integral on the vertical line, we have
\begin{equation}
\label{vertical}
\frac{1}{2 \pi i} \int_{3/4+\epsilon-iT}^{3/4+\epsilon+iT} \Phi_{F,4}(D_4,s) \frac{X^s}{s}\, ds \ll X^{3/4+\epsilon},
\end{equation}
where we have used Lemma \ref{mercontphif4}.

Note that using the Lindel\"of bound \eqref{lindelof=1} to bound $\Phi(s)$ yields an upper bound of size $X^{1/4+\epsilon}$ for the vertical integrals.

Finally, we evaluate the residue of the simple pole at $s=1$, which will give us the constant $c_{F,4}(D_4)$, following \cite{cdod4}. Using \eqref{phik} and Proposition \ref{mercontphif4}, we see that, since the difference between $\Phi_{F,4}(D_4,s)$ and $\frac{1}{2} \Phi(s)$ is holomorphic for $\Re(s) > 1/2$, the two functions $\Phi_{F,4}(D_4,s)$ and $\frac{1}{2} \Phi(s)$ have the same residue at $s=1.$  From equations \eqref{l-zeta}, \eqref{phis}, \eqref{eq1}, and \eqref{phik} it follows that the value of the residue at $s=1$ is equal to
\begin{align}
\frac{1}{2} \sum_{[K:F]=2} \frac{2^{-r_2(K)} \zeta_{K}(1)}{\Nm(\Disc(K/F))^2 \zeta_{K}(2)} \sum_{\mathfrak{c}|2\cO_K} \Nm(2/\mathfrak{c})^{-1} \prod_{\mathfrak{p}|\mathfrak{c}} (1-\Nm(\mathfrak{p})^{-1}), \label{residue2}
\end{align}
where $\fc$ runs over integral ideals of $\cO_K$ and $\fp$ runs over prime ideals of $\cO_K$.
Combining \eqref{contour}, \eqref{vertical}, \eqref{residue2} and Lemma \ref{residue} and choosing $T=X^{1/4}$, the conclusion follows. 

Note that if we assume the Lindel\"of hypothesis, then using the observation following \eqref{vertical}, it follows that we can choose $T=X^{3/4}$ and then the improved bound in the second part of Theorem \ref{main} follows.
\begin{lemma}\label{residue} 
For a number field $E$, 
$$\sum_{\mathfrak{c}|2\cO_E} \Nm(2/\mathfrak{c})^{-1} \prod_{\mathfrak{p}|\mathfrak{c}} (1-\Nm(\mathfrak{p})^{-1})=1,$$
where $\fc$ runs over integral ideals of $\cO_E$ and $\fp$ runs overs prime ideals of $\cO_E$.
\end{lemma}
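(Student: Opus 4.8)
The plan is to exploit that both factors in the summand are multiplicative in $\mathfrak{c}$ and thereby reduce the identity to a one-prime computation. First I would write the prime factorization $2\cO_E = \prod_{\mathfrak{p}\mid 2}\mathfrak{p}^{e_\mathfrak{p}}$ (a nonempty product, since $2\cO_E$ is a proper ideal), so that a divisor $\mathfrak{c}\mid 2\cO_E$ is specified by a choice of exponents $0\le j_\mathfrak{p}\le e_\mathfrak{p}$. Since $\Nm(2/\mathfrak{c}) = \prod_{\mathfrak{p}\mid 2}\Nm(\mathfrak{p})^{e_\mathfrak{p}-j_\mathfrak{p}}$ and $\prod_{\mathfrak{p}\mid\mathfrak{c}}(1-\Nm(\mathfrak{p})^{-1}) = \prod_{\mathfrak{p}:\,j_\mathfrak{p}\ge 1}(1-\Nm(\mathfrak{p})^{-1})$, the summand factors over the primes above $2$, and hence so does the whole sum:
$$\sum_{\mathfrak{c}\mid 2\cO_E}\Nm(2/\mathfrak{c})^{-1}\prod_{\mathfrak{p}\mid\mathfrak{c}}(1-\Nm(\mathfrak{p})^{-1}) = \prod_{\mathfrak{p}\mid 2}\left(\sum_{j=0}^{e_\mathfrak{p}}\Nm(\mathfrak{p})^{\,j-e_\mathfrak{p}}\, w_j\right),$$
where $w_0 = 1$ and $w_j = 1-\Nm(\mathfrak{p})^{-1}$ for $j\ge 1$ (the $j=0$ term gets no factor because then $\mathfrak{p}\nmid\mathfrak{c}$).

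Next I would evaluate each local factor by a geometric series. Fix $\mathfrak{p}\mid 2$ and set $q = \Nm(\mathfrak{p})$, $e = e_\mathfrak{p}$. The local factor is
$$q^{-e} + (1-q^{-1})\sum_{j=1}^{e}q^{\,j-e} = q^{-e} + (1-q^{-1})\sum_{k=0}^{e-1}q^{-k} = q^{-e} + (1-q^{-1})\cdot\frac{1-q^{-e}}{1-q^{-1}} = q^{-e} + \left(1-q^{-e}\right) = 1.$$
Taking the product over all $\mathfrak{p}\mid 2$ then gives $1$, which is exactly the asserted identity.

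This argument is entirely elementary, so there is no genuine obstacle; the only care needed is the bookkeeping that separates the term $j=0$ (where $\mathfrak{p}$ does not divide $\mathfrak{c}$ and so contributes no factor $1-\Nm(\mathfrak{p})^{-1}$) from the terms $j\ge 1$. One could alternatively run the same computation after the substitution $\mathfrak{d} = 2\cO_E/\mathfrak{c}$, but the multiplicative reduction above appears to be the cleanest route.
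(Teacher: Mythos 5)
Your proof is correct and follows essentially the same route as the paper's: both exploit multiplicativity to factor the sum into a product of local factors over the primes dividing $2\cO_E$ and then evaluate each local factor by a telescoping/geometric sum. The only cosmetic difference is that the paper first pulls out the global factor $\Nm(2)^{-1}$ before factoring, whereas you keep the normalization inside each local factor.
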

\begin{proof}
Following \S3.3 of \cite{cdod4}, the above formula can be proven by writing
\begin{align*}
\sum_{\mathfrak{c}|2}& \Nm(2/\mathfrak{c})^{-1} \prod_{\mathfrak{p}|\mathfrak{c}} (1-\Nm(\mathfrak{p})^{-1})= \frac{1}{\Nm(2)} \sum_{\mathfrak{c}|2}\Nm(\mathfrak{c}) \prod_{\mathfrak{p}| \mathfrak{c}} (1-\Nm(\mathfrak{p})^{-1}) \\
&= \frac{1}{\Nm(2)} \prod_{\mathfrak{p}|2} \left(1+ \sum_{j=1}^{v_{\mathfrak{p}}(2)} \Nm(\mathfrak{p}^j) (1-\Nm(\mathfrak{p})^{-1}) \right),
\end{align*}
where $v_{\mathfrak{p}}(2)$ is the valuation of $2$ at $\mathfrak{p}$. The previous formula simplifies to
\begin{align*}
\sum_{\mathfrak{c}|2}& \Nm(2/\mathfrak{c})^{-1} \prod_{\mathfrak{p}|\mathfrak{c}} (1-\Nm(\mathfrak{p})^{-1}) = \frac{1}{\Nm(2)} \prod_{\mathfrak{p}|2} \Nm(\mathfrak{p})^{v_{\mathfrak{p}}(2)}=1.
\end{align*}
\end{proof}

\section{Acknowledgements}

We would like to thank Brandon Alberts, Henri Cohen, Erik Holmes, Jordan Ellenberg, Dick Gross, Kiran S. Kedlaya, Robert Lemke Oliver, Arul Shankar, Jiuya Wang, Melanie Matchett Wood and the referee for helpful discussions. We would also like to thank Wei Ho and Renate Scheidler, the WIN5 organizers, for providing the impetus for this project.

\printbibliography
\end{document}